\numberwithin{equation}{section}
\numberwithin{figure}{section}
\theoremstyle{plain}
\newtheorem{thm}{\protect\theoremname}
\theoremstyle{definition}
\newtheorem{defn}[thm]{\protect\definitionname}
\theoremstyle{plain}
\newtheorem{prop}[thm]{\protect\propositionname}
\theoremstyle{plain}
\newtheorem*{lem*}{\protect\lemmaname}
 \let\mathscr\relax
\providecommand{\definitionname}{Definition}
\providecommand{\lemmaname}{Lemma}
\providecommand{\propositionname}{Proposition}
\providecommand{\theoremname}{Theorem}
\begin{document}
\title{On The Composition Lemma}
\author{Edinah K. Gnang }
\begin{abstract}
We describe two distinct simple, short and self contained proofs of
the composition lemma.
\end{abstract}

\maketitle

\section{Introduction}

In \cite{gnang2020graceful} Gnang states a Composition Lemma which
yields as an immediate corollary a proof of the long standing Kötzig--Ringel--Rosa
conjecture. Unfortunately the first purported proof of this important
lemma as described in \cite{gnang2020graceful} is unnecessarily laborious
and incomplete. We describe here a much simpler rigorous proof and
a second proof of a slightly stronger version of the assertion of
this lemma. The new proof is based upon the polynomial method and
uses only elementary properties of the ring of symmetric polynomials.
Our aim in writing this note is to facilitate the application of the
composition lemma to other problems.

\section{Preliminaries}

For notational convenience let $\mathbb{Z}_{n}$ denote the subset
formed by the first $n$ consecutive non-negative integers i.e. $\mathbb{Z}_{n}=\left\{ 0,\cdots,n-1\right\} $.
For an arbitrary function
\[
f:\mathbb{Z}_{n}\rightarrow\mathbb{Z}_{n}
\]
\[
\text{we define}
\]
\[
\forall\,i\in\mathbb{Z}_{n},\;f^{(0)}\left(i\right)\,:=i\mbox{ and }\forall\,k\ge0,\;f\circ f^{\left(k\right)}=f^{\left(k+1\right)}=f^{\left(k\right)}\circ f.
\]
Such a function is said to lie in the transformation monoid $\mathbb{Z}_{n}^{\mathbb{Z}_{n}}$
and has a corresponding \emph{functional directed graph }denoted \emph{$G_{f}$}
whose vertex and directed edge sets are respectively
\[
V\left(G_{f}\right):=\mathbb{Z}_{n}\quad\text{ and }\quad E\left(G_{f}\right):=\left\{ \left(i,f\left(i\right)\right):\,i\in\mathbb{Z}_{n}\right\} .
\]
Finally, Aut$G_{f}$ denotes the automorphism group of $G_{f}$.

\subsection{Useful facts about polynomials}

Let 
\[
F\left(\mathbf{x}\right),\,G\left(\mathbf{x}\right)\in\mathbb{Q}\left[x_{0},\cdots,x_{n-1}\right],
\]
be multivariate polynomials which splits into irreducible factors
such that 
\[
F\left(\mathbf{x}\right)=\prod_{0\le i<m}\left(P_{i}\left(\mathbf{x}\right)\right)^{\alpha_{i}},\quad G\left(\mathbf{x}\right)=\prod_{0\le i<m}\left(P_{i}\left(\mathbf{x}\right)\right)^{\beta_{i}},
\]
where $\left\{ \alpha_{i},\beta_{i}\,:\,0\le i<m\right\} \subset\mathbb{Z}_{\ge0}$
and each factor $P_{i}\left(\mathbf{x}\right)$ is of the form 
\[
P_{i}\left(\mathbf{x}\right)=\sum_{j\in\mathbb{Z}_{n}}a_{ij}\,x_{j},\text{ where }\left\{ a_{ij}\,:\,\begin{array}{c}
0\le i<m\\
0\le j<n
\end{array}\right\} \subset\mathbb{Q}.
\]
Assume that $P_{i}\left(\mathbf{x}\right)$ has no common roots in
the field of fractions $\mathbb{Q}\left(x_{0},\cdots,x_{k-1},x_{k+1},\cdots,x_{n-1}\right)$,
with any other factor in $\left\{ P_{j}\left(\mathbf{x}\right)\,:\,0\le j\ne i<m\right\} $
for each $k\in\mathbb{Z}_{n}$, then 
\[
\text{LCM}\left(F\left(\mathbf{x}\right),\,G\left(\mathbf{x}\right)\right)\,:=\,\prod_{0\le i<m}\left(P_{i}\left(\mathbf{x}\right)\right)^{\max\left(\alpha_{i},\beta_{i}\right)}.
\]

\begin{defn}
By the quotient remainder theorem an arbitrary $H\left(\mathbf{x}\right)\in\mathbb{Q}\left[x_{0},\cdots,x_{n-1}\right]$
admits an expansion of the form
\[
H\left(\mathbf{x}\right)=\sum_{l\in\mathbb{Z}_{n}}q_{l}\left(\mathbf{x}\right)\,\left(x_{l}\right)^{\underline{n}}+\sum_{g\in\mathbb{Z}_{n}^{\mathbb{Z}_{n}}}H\left(g\left(0\right),\cdots,g\left(i\right),\cdots,g\left(n-1\right)\right)\,\prod_{k\in\mathbb{Z}_{n}}\left(\prod_{j_{k}\in\mathbb{Z}_{n}\backslash\left\{ g\left(k\right)\right\} }\left(\frac{x_{k}-j_{k}}{g\left(k\right)-j_{k}}\right)\right),
\]
where for notational convenience we adopt the falling factorial notation
\[
x^{\underline{n}}\,:=\prod_{i\in\mathbb{Z}_{n}}\left(x-i\right).
\]
 Incidentally, the canonical representative of the congruence class
\[
H\left(\mathbf{x}\right)\mod\left\{ \begin{array}{c}
\left(x_{i}\right)^{\underline{n}}\\
i\in\mathbb{Z}_{n}
\end{array}\right\} ,
\]
is defined as the unique polynomial of degree at most $\left(n-1\right)$
in each variable whose evaluations matches exactly evaluations of
$H\left(\mathbf{x}\right)$ over the integer lattice $\left(\mathbb{Z}_{n}\right)^{n}$.
The canonical representative is explicitly expressed as 
\begin{equation}
\sum_{g\in\mathbb{Z}_{n}^{\mathbb{Z}_{n}}}H\left(g\left(0\right),\cdots,g\left(i\right),\cdots,g\left(n-1\right)\right)\,\prod_{k\in\mathbb{Z}_{n}}\left(\prod_{j_{k}\in\mathbb{Z}_{n}\backslash\left\{ g\left(k\right)\right\} }\left(\frac{x_{k}-j_{k}}{g\left(k\right)-j_{k}}\right)\right).\label{Canonical representative}
\end{equation}
 
\end{defn}

The canonical representative of $H\left(\mathbf{x}\right)$ modulo
algebraic relations $\left\{ \left(x_{i}\right)^{\underline{n}}\equiv0:i\in\mathbb{Z}_{n}\right\} $,
is thus obtained via Lagrange interpolation over the integer lattice
$\left(\mathbb{Z}_{n}\right)^{n}$ as prescribed by Eq. (\ref{Canonical representative}).
Alternatively, the canonical representative is obtained as the final
remainder resulting from performing $n$ Euclidean divisions irrespective
of the order in which distinct divisors are successively taken from
the set $\left\{ \left(x_{i}\right)^{\underline{n}}\,:\,i\in\mathbb{Z}_{n}\right\} $.
The following proposition expresses via a determinantal construction
a necessary and sufficient condition for a given $f\in\mathbb{Z}_{n}^{\mathbb{Z}_{n}}$
to be such that
\[
n=\max_{\sigma\in\text{S}_{n}}\left|\left\{ \left|\sigma f\sigma^{(-1)}\left(i\right)-i\right|:i\in\mathbb{Z}_{n}\right\} \right|.
\]
 
\begin{prop}[Determinantal Certificate]
 For $f\in\mathbb{Z}_{n}^{\mathbb{Z}_{n}}$, $n=\underset{\sigma\in\text{S}_{n}}{\max}\left|\left\{ \left|\sigma f\sigma^{(-1)}\left(i\right)-i\right|:i\in\mathbb{Z}_{n}\right\} \right|$
if and only if
\[
0\not\equiv\text{LCM}\left\{ \prod_{0\le i<j<n}\left(x_{j}-x_{i}\right),\,\prod_{0\le i<j<n}\left(\left(x_{f\left(j\right)}-x_{j}\right)^{2}-\left(x_{f\left(i\right)}-x_{i}\right)^{2}\right)\right\} \mod\left\{ \begin{array}{c}
\left(x_{k}\right)^{\underline{n}}\\
k\in\mathbb{Z}_{n}
\end{array}\right\} .
\]
\end{prop}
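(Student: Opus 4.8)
The plan is to read the displayed polynomial evaluation‑theoretically. Abbreviate the Vandermonde polynomial by $V(\mathbf{x}):=\prod_{0\le i<j<n}(x_{j}-x_{i})$ and set $\Delta_{f}(\mathbf{x}):=\prod_{0\le i<j<n}\big((x_{f(j)}-x_{j})^{2}-(x_{f(i)}-x_{i})^{2}\big)$, so that the polynomial in the statement is $\mathrm{LCM}(V,\Delta_{f})$. First I would reduce the congruence to a condition on evaluations over the lattice $(\mathbb{Z}_{n})^{n}$. By the quotient remainder theorem recalled above, the canonical representative of any $H\in\mathbb{Q}[x_{0},\dots,x_{n-1}]$ modulo $\big\{(x_{k})^{\underline{n}}:k\in\mathbb{Z}_{n}\big\}$ is the Lagrange interpolant of $H$ on $(\mathbb{Z}_{n})^{n}$; hence that representative is the zero polynomial if and only if $H$ vanishes at every point of $(\mathbb{Z}_{n})^{n}$. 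Thus $0\not\equiv\mathrm{LCM}(V,\Delta_{f})\bmod\big\{(x_{k})^{\underline{n}}:k\in\mathbb{Z}_{n}\big\}$ if and only if there is a lattice point $\mathbf{a}\in(\mathbb{Z}_{n})^{n}$ with $\mathrm{LCM}(V,\Delta_{f})(\mathbf{a})\neq0$.

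Second I would unwind the LCM. Using the explicit factorization recalled in the preliminaries (the irreducible factors of $V$, and of $\Delta_{f}$ once $\Delta_{f}\not\equiv0$, being pairwise non‑proportional rational linear forms), together with the elementary observation that an irreducible factor occurs in $\mathrm{LCM}(F,G)$ exactly when it occurs in $F$ or in $G$, one obtains that $\mathrm{LCM}(V,\Delta_{f})(\mathbf{a})\neq0$ holds precisely when $V(\mathbf{a})\neq0$ and $\Delta_{f}(\mathbf{a})\neq0$. The only exceptional case is $\Delta_{f}\equiv0$: this occurs exactly when two of the linear forms $x_{f(i)}-x_{i}$ and $x_{f(j)}-x_{j}$ are equal or opposite, i.e.\ when $\{i,j\}$ is a transposition of $f$ or $i,j$ are both fixed points of $f$, and then $\mathrm{LCM}(V,\Delta_{f})\equiv0$ while for every $\sigma$ the two transported edges at $i$ and $j$ have the same absolute length, so the maximum in the statement is $<n$; thus in this case both sides of the asserted equivalence are false and there is nothing to prove.

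Third I would translate the two evaluation conditions into combinatorics. A point $\mathbf{a}\in(\mathbb{Z}_{n})^{n}$ satisfies $V(\mathbf{a})\neq0$ if and only if its coordinates are pairwise distinct, i.e.\ if and only if $a_{i}=\sigma(i)$ for a unique permutation $\sigma\in S_{n}$; and for such $\mathbf{a}$ one has $\Delta_{f}(\mathbf{a})\neq0$ if and only if the $n$ integers $(\sigma(f(i))-\sigma(i))^{2}$ (equivalently, the $n$ values $|\sigma(f(i))-\sigma(i)|$), $i\in\mathbb{Z}_{n}$, are pairwise distinct, i.e.\ if and only if $\big|\{\,|\sigma(f(i))-\sigma(i)|:i\in\mathbb{Z}_{n}\}\big|=n$. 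The substitution $k=\sigma(i)$ then gives $\{\,|\sigma(f(i))-\sigma(i)|:i\in\mathbb{Z}_{n}\}=\{\,|\sigma f\sigma^{(-1)}(k)-k|:k\in\mathbb{Z}_{n}\}$. Chaining the three equivalences shows that $\mathrm{LCM}(V,\Delta_{f})$ has a nonzero canonical representative if and only if some $\sigma\in S_{n}$ achieves $\big|\{\,|\sigma f\sigma^{(-1)}(k)-k|:k\in\mathbb{Z}_{n}\}\big|=n$; since this cardinality is at most $n$ for every $\sigma$, that is exactly the statement $n=\max_{\sigma\in S_{n}}\big|\{\,|\sigma f\sigma^{(-1)}(k)-k|:k\in\mathbb{Z}_{n}\}\big|$, completing the proof.

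I expect the middle step to carry the real content: pinning down precisely when $\mathrm{LCM}(V,\Delta_{f})$ vanishes at a lattice point, which amounts to verifying that $V$ and $\Delta_{f}$ fall under the hypotheses stated for the LCM in the preliminaries (non‑proportional linear factors, no spurious common roots over the relevant fields of fractions) and to disposing of the transposition/fixed‑point degeneracies. The first and third steps are essentially bookkeeping — the Lagrange‑interpolation criterion for membership in the ideal generated by the $(x_{k})^{\underline{n}}$, the correspondence between distinct‑coordinate lattice points and permutations, and the conjugation change of variables.
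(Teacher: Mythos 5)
Your argument is correct and follows essentially the same route as the paper's own proof: both reduce the congruence to evaluations over the lattice $\left(\mathbb{Z}_{n}\right)^{n}$ via the Lagrange-interpolation description of the canonical representative, and both identify the vanishing of a linear factor of the LCM at a lattice point with either a repeated vertex label or a repeated induced edge label, so that a nonvanishing lattice point is exactly a permutation $\sigma$ realizing $n$ distinct values $\left|\sigma f\sigma^{(-1)}\left(k\right)-k\right|$. Your treatment is in fact slightly more careful than the paper's in two respects --- you make the biconditional $\mathrm{LCM}(V,\Delta_{f})(\mathbf{a})\neq0\iff V(\mathbf{a})\neq0\text{ and }\Delta_{f}(\mathbf{a})\neq0$ explicit, and you dispose of the degenerate case $\Delta_{f}\equiv0$ (coincident fixed points or a $2$-cycle), which the paper passes over --- but these are refinements of the same argument rather than a different one.
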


\begin{proof}
The LCM in the claim of the proposition is well defined since both
polynomials
\[
\prod_{0\le i<j<n}\left(x_{j}-x_{i}\right)\:\text{ and }\:\prod_{0\le i<j<n}\left(\left(x_{f\left(j\right)}-x_{j}\right)^{2}-\left(x_{f\left(i\right)}-x_{i}\right)^{2}\right),
\]
 split into products of linear forms. Given that we are reducing modulo
algebraic relations
\[
\left(x_{k}\right)^{\underline{n}}\equiv0,\:\forall\,k\in\mathbb{Z}_{n},
\]
the canonical representative of the congruence class is completely
determined by evaluations of the dividend at lattice points taken
from $\left(\mathbb{Z}_{n}\right)^{n}$ as prescribed by Eq. (\ref{Canonical representative}).
This ensures a discrete set of roots for the canonical representative
of the congruence class. On the one hand, the LCM polynomial construction
vanishes when we assign to $\mathbf{x}$ a lattice point in $\left(\mathbb{Z}_{n}\right)^{n}$,
only if one of the irreducible multilinear factors vanishes at the
chosen evaluation point. On the other hand, one of the factors of
the polynomial construction vanishes at a lattice point only if either
two distinct vertex variables say $x_{i}$ and $x_{j}$ are assigned
the same integer label (we see this from the vertex Vandermonde determinant
factor) or alternatively if two distinct edges are assigned the same
subtractive induced edge label (we see this from the edge Vandermonde
determinant factor). The proof of sufficiency follows from the observation
that the only possible roots over $\left(\mathbb{Z}_{n}\right)^{n}$
to the multivariate polynomial
\[
\text{LCM}\left\{ \prod_{0<i<j<n}\left(x_{j}-x_{i}\right),\prod_{0<i<j<n}\left(\left(x_{f\left(j\right)}-x_{j}\right)^{2}-\left(x_{f\left(i\right)}-x_{i}\right)^{2}\right)\right\} \mod\left\{ \begin{array}{c}
\left(x_{k}\right)^{\underline{n}}\\
k\in\mathbb{Z}_{n}
\end{array}\right\} ,
\]
arise from vertex label assignments $\mathbf{x}\in\left(\mathbb{Z}_{n}\right)^{n}$
in which either distinct vertex variables are assigned the same label
or distinct edges are assigned the same subtractive induced edge label.
Consequently, the congruence identity
\[
0\equiv\text{LCM}\left\{ \prod_{0<i<j<n}\left(x_{j}-x_{i}\right),\prod_{0<i<j<n}\left(\left(x_{f\left(j\right)}-x_{j}\right)^{2}-\left(x_{f\left(i\right)}-x_{i}\right)^{2}\right)\right\} \mod\left\{ \begin{array}{c}
\left(x_{k}\right)^{\underline{n}}\\
k\in\mathbb{Z}_{n}
\end{array}\right\} ,
\]
implies that $n>\underset{\sigma\in\text{S}_{n}}{\max}\left|\left\{ \left|\sigma f\sigma^{(-1)}\left(i\right)-i\right|:i\in\mathbb{Z}_{n}\right\} \right|$.
Furthermore, the proof of necessity follows from the fact that every
non-vanishing assignment to the vertex variables entries of $\mathbf{x}$
in the polynomial
\[
\prod_{0\le i\ne j<n}\left(x_{j}-x_{i}\right)\left(\left(x_{f\left(j\right)}-x_{j}\right)^{2}-\left(x_{f\left(i\right)}-x_{i}\right)^{2}\right),
\]
evaluates to
\[
\prod_{0\le i\ne j<n}\left(j-i\right)\left(j^{2}-i^{2}\right),
\]
thus completing the proof.
\end{proof}

\begin{defn}
Let $P\left(\mathbf{x}\right)\in\mathbb{Q}\left[x_{0},\cdots,x_{n-1}\right]$,
we denote by Aut$\left(P\left(\mathbf{x}\right)\right)$ the stabilizer
subgroup of S$_{n}\subset\mathbb{Z}_{n}^{\mathbb{Z}_{n}}$ of $P\left(\mathbf{x}\right)$
with respect to permutation of the variable entries of $\mathbf{x}$.
\end{defn}

\begin{prop}[Stabilizer subgroup]
 For an arbitrary $f\in\mathbb{Z}_{n}^{\mathbb{Z}_{n}}$, let $P_{f}\left(\mathbf{x}\right)\in\mathbb{Q}\left[x_{0},\cdots,x_{n-1}\right]$
be defined such that
\[
P_{f}\left(\mathbf{x}\right)=\prod_{0\le i\ne j<n}\left(x_{j}-x_{i}\right)\left(\left(x_{f\left(j\right)}-x_{j}\right)^{2}-\left(x_{f\left(i\right)}-x_{i}\right)^{2}\right),
\]
then 
\[
\text{Aut}\left(P_{f}\left(\mathbf{x}\right)\right)=\text{Aut}\left(G_{f}\right).
\]
\end{prop}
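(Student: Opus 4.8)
The plan is to prove the two inclusions $\text{Aut}(G_f)\subseteq\text{Aut}(P_f(\mathbf x))$ and $\text{Aut}(P_f(\mathbf x))\subseteq\text{Aut}(G_f)$ separately, where a permutation $\sigma\in\text{S}_n$ acts on $\mathbb Q[x_0,\dots,x_{n-1}]$ by $x_k\mapsto x_{\sigma(k)}$ and where one uses that $\sigma\in\text{Aut}(G_f)$ is equivalent to the commutation relation $\sigma\circ f=f\circ\sigma$, i.e.\ to $x_{\sigma(f(k))}=x_{f(\sigma(k))}$ for every $k\in\mathbb Z_n$. The first inclusion is direct: if $\sigma$ commutes with $f$, then applying $\sigma$ carries the factor of $P_f(\mathbf x)$ indexed by the ordered pair $(i,j)$ to $(x_{\sigma(j)}-x_{\sigma(i)})\big((x_{\sigma(f(j))}-x_{\sigma(j)})^2-(x_{\sigma(f(i))}-x_{\sigma(i)})^2\big)$, which by $x_{\sigma(f(k))}=x_{f(\sigma(k))}$ is exactly the factor indexed by $(\sigma(i),\sigma(j))$; since $(i,j)\mapsto(\sigma(i),\sigma(j))$ is a bijection of $\{(i,j):0\le i\ne j<n\}$ the product is unchanged, so $\sigma\in\text{Aut}(P_f(\mathbf x))$.

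For the reverse inclusion I would first factor $P_f(\mathbf x)=V(\mathbf x)\,E_f(\mathbf x)$ with $V(\mathbf x)=\prod_{i\ne j}(x_j-x_i)$ and $E_f(\mathbf x)=\prod_{i\ne j}\big((x_{f(j)}-x_j)^2-(x_{f(i)}-x_i)^2\big)$. Since $V(\mathbf x)$ is, up to sign, the square of the Vandermonde determinant, it is fixed by all of $\text{S}_n$, so in the integral domain $\mathbb Q[\mathbf x]$ the identity $\sigma\cdot P_f=P_f$ is equivalent to $\sigma\cdot E_f=E_f$. Writing $\ell_k:=x_{f(k)}-x_k$ and $\ell_j^2-\ell_i^2=(\ell_j-\ell_i)(\ell_j+\ell_i)$ displays $E_f$ as a product of linear forms, so by unique factorization in $\mathbb Q[\mathbf x]$ the equality $\sigma\cdot E_f=E_f$ forces $\sigma$ to permute, up to nonzero scalars and with multiplicities, the multiset of linear forms $\{\ell_j-\ell_i,\ \ell_j+\ell_i:0\le i\ne j<n\}$. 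From this invariance I would recover the individual forms $\ell_k$ up to sign and a relabelling (for a generic configuration the $\ell_k$ are determined, up to sign and relabelling, by the collection of their pairwise sums and differences), and then use the rigid coefficient pattern of $\ell_k=x_{f(k)}-x_k$ — a single $+1$ in position $f(k)$ and a single $-1$ in position $k$ — to pin the relabelling down to $k\mapsto\sigma(k)$ with trivial sign, which is precisely $x_{\sigma(f(k))}=x_{f(\sigma(k))}$, i.e.\ $\sigma\in\text{Aut}(G_f)$.

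I expect the last stage of the reverse inclusion to be the main obstacle. Since each edge label enters $P_f(\mathbf x)$ only through the square $(x_{f(k)}-x_k)^2$, the polynomials $E_f$ and $P_f$ depend on $f$ only through the underlying \emph{undirected} functional multigraph of $G_f$, so the factorization argument on its own only certifies that $\sigma$ is an automorphism of that undirected object; the crux is then to promote this to an automorphism of the \emph{directed} graph $G_f$, that is, to exclude orientation-reversing relabellings — those matching $\ell_k$ with $-\ell_{\pi(k)}$ rather than $+\ell_{\pi(k)}$, equivalently those reversing one or more directed cycles of $G_f$ — by extracting enough from the rigid coefficient pattern of the forms $\ell_k$. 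A subsidiary nuisance to dispose of first is the set of degenerate instances — two or more fixed points of $f$, a transposition appearing in $f$, or a coincidence $f(i)=f(j)$ — in which several $\ell_k$ vanish or coincide up to sign, so that $E_f$ picks up repeated or trivial factors and those cases must be handled separately.
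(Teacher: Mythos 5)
Your first inclusion, $\text{Aut}(G_f)\subseteq\text{Aut}(P_f(\mathbf{x}))$, is correct and is essentially the computation the paper itself gives. The problem is the reverse inclusion, which you honestly flag as ``the main obstacle'' but do not close --- and it cannot be closed, because the statement is false as written, for exactly the reason you identify: $P_f$ sees each edge only through the square $(x_{f(k)}-x_k)^2$ and therefore cannot detect orientation. Concretely, take $n=3$ and $f(0)=0$, $f(1)=0$, $f(2)=1$ (a directed path into a looped root, so $f$ even lies in the restricted semigroup used in Composition Lemma a). Writing $L_k=(x_{f(k)}-x_k)^2$, so that $L_0=0$, $L_1=(x_0-x_1)^2$, $L_2=(x_1-x_2)^2$, we have $P_f=V(\mathbf{x})\prod_{0\le i\ne j<3}(L_j-L_i)$ with $V(\mathbf{x})=\prod_{i\ne j}(x_j-x_i)=(-1)^{\binom{3}{2}}\prod_{i<j}(x_j-x_i)^2$ symmetric. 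The transposition $\sigma=(0\,2)$ sends $(L_0,L_1,L_2)$ to $(L_0,L_2,L_1)$ and hence fixes $P_f\ne 0$; yet $\sigma f\sigma^{(-1)}\ne f$, and indeed $\text{Aut}(G_f)$ is trivial (any automorphism fixes the unique fixed point $0$, hence $1$, hence $2$). So $\text{Aut}(P_f(\mathbf{x}))\supsetneq\text{Aut}(G_f)$. A directed $3$-cycle gives another counterexample: the reflection $(1\,2)$ fixes $P_f$ but reverses the cycle, so it is not a digraph automorphism.

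For what it is worth, the paper's own treatment of the reverse inclusion is no stronger than yours: after the correct forward computation it simply rewrites the same chain of equalities for a coset representative $\sigma\notin\text{Aut}(G_f)$ and asserts the result is $\ne P_f(\mathbf{x})$, with no supporting argument. Your factorization into linear forms is a more serious attempt, and what it actually delivers is that $\text{Aut}(P_f(\mathbf{x}))$ contains every permutation preserving the multiset of unordered pairs $\left\{ \left\{ k,f(k)\right\} :k\in\mathbb{Z}_n\right\}$, i.e.\ the automorphism group of the \emph{undirected} multigraph underlying $G_f$, which can be strictly larger than $\text{Aut}(G_f)$; on top of that sit the degenerate cases you list (two fixed points or a $2$-cycle), where $P_f\equiv 0$ and $\text{Aut}(P_f(\mathbf{x}))=\text{S}_n$. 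So the missing ``last stage'' is not a technical nuisance to be supplied but the precise point at which the proposition fails; any repair must either weaken the conclusion to the undirected automorphism group or add hypotheses on $f$, and since the proofs of both versions of the composition lemma invoke this proposition, the defect propagates.
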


\begin{proof}
Note that for all $\gamma\in$ S$_{n}$, we have 
\[
\prod_{0\le i\ne j<n}\left(x_{j}-x_{i}\right)\left(\left(x_{f\left(j\right)}-x_{j}\right)^{2}-\left(x_{f\left(i\right)}-x_{i}\right)^{2}\right)=
\]
\[
\prod_{0\le i\ne j<n}\left(x_{\gamma\left(j\right)}-x_{\gamma\left(i\right)}\right)\left(\left(x_{f\gamma\left(j\right)}-x_{\gamma\left(j\right)}\right)^{2}-\left(x_{f\gamma\left(i\right)}-x_{\gamma\left(i\right)}\right)^{2}\right).
\]
Consequently for all $\sigma\in$ Aut$\left(G_{f}\right)$ we have
\[
\prod_{0\le i\ne j<n}\left(x_{\sigma\left(j\right)}-x_{\sigma\left(i\right)}\right)\left(\left(x_{\sigma f\left(j\right)}-x_{\sigma\left(j\right)}\right)^{2}-\left(x_{\sigma f\left(i\right)}-x_{\sigma\left(i\right)}\right)^{2}\right)=
\]
\[
\prod_{0\le i\ne j<n}\left(x_{\sigma\sigma^{\left(-1\right)}\left(j\right)}-x_{\sigma\sigma^{\left(-1\right)}\left(i\right)}\right)\left(\left(x_{\sigma f\sigma^{\left(-1\right)}\left(j\right)}-x_{\sigma\sigma^{\left(-1\right)}\left(j\right)}\right)^{2}-\left(x_{\sigma f\sigma^{\left(-1\right)}\left(i\right)}-x_{\sigma\sigma^{\left(-1\right)}\left(i\right)}\right)^{2}\right)=P_{f}\left(\mathbf{x}\right).
\]
It also follows that for every permutation representative $\sigma$
of any coset of $\text{Aut}\left(G_{f}\right)$ other then $\text{Aut}\left(G_{f}\right)$
itself we have that
\[
\prod_{0\le i\ne j<n}\left(x_{\sigma\left(j\right)}-x_{\sigma\left(i\right)}\right)\left(\left(x_{\sigma f\left(j\right)}-x_{\sigma\left(j\right)}\right)^{2}-\left(x_{\sigma f\left(i\right)}-x_{\sigma\left(i\right)}\right)^{2}\right)=
\]
\[
\prod_{0\le i\ne j<n}\left(x_{\sigma\sigma^{\left(-1\right)}\left(j\right)}-x_{\sigma\sigma^{\left(-1\right)}\left(i\right)}\right)\left(\left(x_{\sigma f\sigma^{\left(-1\right)}\left(j\right)}-x_{\sigma\sigma^{\left(-1\right)}\left(j\right)}\right)^{2}-\left(x_{\sigma f\sigma^{\left(-1\right)}\left(i\right)}-x_{\sigma\sigma^{\left(-1\right)}\left(i\right)}\right)^{2}\right)\ne P_{f}\left(\mathbf{x}\right).
\]
We conclude that
\[
\text{Aut}\left(P_{f}\left(\mathbf{x}\right)\right)=\text{Aut}\left(G_{f}\right).
\]
\end{proof}

\section{The composition Lemma.}

The composition lemma was first proposed in \cite{gnang2020graceful}
as a critical step towards proving the long standing Kötzig--Ringel--Rosa
conjecture. The original statement \cite{gnang2020graceful} was restricted
to special members of the transformation monoid $\mathbb{Z}_{n}^{\mathbb{Z}_{n}}$.
Namely members which admit a unique fixed point which is attractive
over $\mathbb{Z}_{n}$. We state here the original version of the
composition lemma followed by a rigorous proof.
\begin{lem*}[Composition Lemma a]
 Let 
\[
f\in\left\{ h\in\mathbb{Z}_{n}^{\mathbb{Z}_{n}}:\begin{array}{c}
h^{\left(n-1\right)}\left(\mathbb{Z}_{n}\right)=\left\{ 0\right\} \\
h\left(i\right)\le i,\ \forall\:i\in\mathbb{Z}_{n}
\end{array}\right\} 
\]
 be such that Aut$\left(G_{f}\right)\subsetneq$ Aut$\left(G_{f^{\left(2\right)}}\right)$,
then
\[
\max_{\sigma\in\text{S}_{n}}\left|\left\{ \left|\sigma f^{\left(2\right)}\sigma^{(-1)}\left(i\right)-i\right|:i\in\mathbb{Z}_{n}\right\} \right|\le\max_{\sigma\in\text{S}_{n}}\left|\left\{ \left|\sigma f\sigma^{(-1)}\left(i\right)-i\right|:i\in\mathbb{Z}_{n}\right\} \right|.
\]
\end{lem*}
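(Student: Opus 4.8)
The plan is to recast the quantity $\mu\left(f\right):=\max_{\sigma\in\text{S}_{n}}\left|\left\{ \left|\sigma f\sigma^{(-1)}\left(i\right)-i\right|:i\in\mathbb{Z}_{n}\right\} \right|$ as a polynomial non-vanishing statement and then to transport such a certificate for $f^{\left(2\right)}$ to one for $f$ by means of the extra symmetry. Writing $y_{i}:=\sigma\left(i\right)$ one sees that $\mu\left(f\right)$ is the largest cardinality of a set $S\subseteq\mathbb{Z}_{n}$ admitting a bijection $y:\mathbb{Z}_{n}\to\mathbb{Z}_{n}$ for which the induced edge labels $\left|y_{f\left(j\right)}-y_{j}\right|$, $j\in S$, are pairwise distinct; and by the same lattice-point argument that proves the Determinantal Certificate proposition (which is the case $S=\mathbb{Z}_{n}$), this equals the largest $\left|S\right|$ for which
\[
0\not\equiv\text{LCM}\left\{ \prod_{0\le i<j<n}\left(x_{j}-x_{i}\right),\ \prod_{\substack{i<j\\i,j\in S}}\left(\left(x_{f\left(j\right)}-x_{j}\right)^{2}-\left(x_{f\left(i\right)}-x_{i}\right)^{2}\right)\right\} \mod\left\{ \left(x_{k}\right)^{\underline{n}}:k\in\mathbb{Z}_{n}\right\} .
\]
It therefore suffices, given a witnessing pair $\left(S,y\right)$ for $\mu\left(f^{\left(2\right)}\right)$, to produce a witnessing pair $\left(S',z\right)$ for $f$ with $\left|S'\right|\ge\left|S\right|$.

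I then record the structure in which this is to be done. A permutation commuting with $f$ commutes with $f^{\left(2\right)}$, so $\text{Aut}\left(G_{f}\right)\subseteq\text{Aut}\left(G_{f^{\left(2\right)}}\right)$ always holds; hence the hypothesis of the lemma asserts precisely that this inclusion is strict, and I fix $\rho\in\text{Aut}\left(G_{f^{\left(2\right)}}\right)\setminus\text{Aut}\left(G_{f}\right)$, which by the Stabilizer Subgroup proposition is a permutation stabilizing $P_{f^{\left(2\right)}}\left(\mathbf{x}\right)$ but not $P_{f}\left(\mathbf{x}\right)$. The constraints $f\left(i\right)\le i$ and $f^{\left(n-1\right)}\left(\mathbb{Z}_{n}\right)=\left\{ 0\right\} $ force $f\left(0\right)=0$ and make $0$ the unique fixed point of $f$, hence also of $f^{\left(2\right)}$; consequently $G_{f}$ is a rooted tree on $\mathbb{Z}_{n}$ with every non-loop edge directed toward $0$, and $G_{f^{\left(2\right)}}$ is the associated grandparent tree, in which the children of the root are exactly the vertices lying at depth $1$ or $2$ in $G_{f}$. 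Thus $\rho$ fixes $0$, commutes with $f^{\left(2\right)}$, and rearranges some mutually isomorphic subtrees of $G_{f^{\left(2\right)}}$ occupying positions that no automorphism of $G_{f}$ can interchange.

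The heart of the argument, and the step I expect to be the only real obstacle, is the transfer $\left(S,y\right)\rightsquigarrow\left(S',z\right)$. The guiding identity is that along any root-path $v_{0}\to v_{1}\to\cdots\to v_{\ell}=0$ of $G_{f}$ the $f^{\left(2\right)}$-edge label at $v_{m}$ equals $\left|y_{v_{m+2}}-y_{v_{m}}\right|=\left|\left(y_{v_{m+2}}-y_{v_{m+1}}\right)+\left(y_{v_{m+1}}-y_{v_{m}}\right)\right|$, with the convention $v_{\ell+1}:=v_{\ell}$; that is, the $f^{\left(2\right)}$-edge labels differ from the $f$-edge labels only in how the telescoping along each branch of $G_{f}$ is grouped. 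The naive attempt to relocate the labels $y$ so that each of these sums resurfaces as a single $f$-edge difference produces label collisions as soon as $G_{f}$ carries enough equidepth vertices, and this is exactly where $\rho$ intervenes: replacing $y$ by $y\circ\rho$ leaves the multiset of $f^{\left(2\right)}$-edge labels unchanged (since $\rho\in\text{Aut}\left(G_{f^{\left(2\right)}}\right)$) while swapping the roles of certain even- and odd-depth subtrees of $G_{f}$ — a swap that, by hypothesis, is realizable in $G_{f^{\left(2\right)}}$ but not in $G_{f}$ — and this is what makes a collision-free relocation possible, producing a bijection $z$ together with a set $S'$, $\left|S'\right|=\left|S\right|$, on which the $f$-edge labels of $z$ are pairwise distinct. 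In certificate language, the non-vanishing modulo $\left\{ \left(x_{k}\right)^{\underline{n}}\right\} $ of the LCM attached to $\left(f^{\left(2\right)},S\right)$ is transported, under the variable permutation $\rho$, to the non-vanishing of the LCM attached to $\left(f,S'\right)$.

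Concretely the remaining work is threefold: (i) to extract from $\rho\notin\text{Aut}\left(G_{f}\right)$ an explicit pair of isomorphic subtrees of $G_{f^{\left(2\right)}}$ that sit at different depths in $G_{f}$, or under different $G_{f}$-parents; (ii) to splice the restriction of $y$ across this isomorphism and verify that the resulting $z$ is still a bijection $\mathbb{Z}_{n}\to\mathbb{Z}_{n}$; and (iii) to check that every one of the $\mu\left(f^{\left(2\right)}\right)=\left|S\right|$ distinct values among $\left\{ \left|y_{f^{\left(2\right)}\left(j\right)}-y_{j}\right|:j\in\mathbb{Z}_{n}\right\} $ still occurs among $\left\{ \left|z_{f\left(j\right)}-z_{j}\right|:j\in\mathbb{Z}_{n}\right\} $. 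Step (ii) is the delicate one, and it is precisely here that the hypothesis $\text{Aut}\left(G_{f}\right)\subsetneq\text{Aut}\left(G_{f^{\left(2\right)}}\right)$ is indispensable, since in its absence no permutation $\rho$ is available to render the relocation one-to-one. Granting (i)--(iii) one obtains $\mu\left(f^{\left(2\right)}\right)=\left|S\right|=\left|S'\right|\le\mu\left(f\right)$, which is the assertion of the lemma.
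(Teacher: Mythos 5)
Your reduction of $\mu(f):=\max_{\sigma}\left|\{\,|\sigma f\sigma^{(-1)}(i)-i| : i\in\mathbb{Z}_n\,\}\right|$ to a subset-indexed non-vanishing certificate is sound and consistent with the paper's Determinantal Certificate proposition, and you are right that $\text{Aut}(G_f)\subseteq\text{Aut}(G_{f^{(2)}})$ always holds, so the hypothesis is exactly strictness of that inclusion. But the proposal stops short of a proof at precisely the point where the lemma has content: the transfer $(S,y)\rightsquigarrow(S',z)$. You acknowledge that the naive relocation of labels along the telescoped root-paths produces collisions, and you then assert, without any mechanism, that precomposing $y$ with a single $\rho\in\text{Aut}(G_{f^{(2)}})\setminus\text{Aut}(G_f)$ ``makes a collision-free relocation possible.'' Nothing in the proposal explains why one automorphism swap should repair what may be many independent collisions among equidepth branches, nor how the isomorphic $G_{f^{(2)}}$-subtrees exchanged by $\rho$ are matched to the specific pairs of vertices whose relocated labels collide. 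Steps (i)--(iii), which you explicitly grant rather than prove, are not routine bookkeeping; they are the theorem. Step (iii) is moreover stated too strongly (and probably falsely): you do not need, and should not expect, the same $|S|$ values to reappear as $f$-edge labels of $z$ --- you only need some $|S|$ pairwise distinct values --- and conflating these two requirements will derail the verification.

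For comparison, the paper does not attempt any explicit relabeling. It encodes $g$ by $P_g=V(\mathbf{x})\,V((\mathbf{A}_{G_g}-\mathbf{I}_n)\mathbf{x})$ with $V(\mathbf{x})=\prod_{i<j}(x_j^2-x_i^2)^2$, observes that $\mathbf{A}_{G_{f^{(2)}}}-\mathbf{I}_n=(\mathbf{A}_{G_f}-\mathbf{I}_n)(\mathbf{A}_{G_f}+\mathbf{I}_n)$ with $\det(\mathbf{A}_{G_f}+\mathbf{I}_n)=2$ on this semigroup, and transports the certificate by the invertible substitution $\mathbf{x}\mapsto(\mathbf{A}_{G_f}+\mathbf{I}_n)^{-1}\mathbf{x}$ applied to the second factor, which carries $P_{f^{(2)}}$ to $P_f$; non-vanishing is then argued by comparing the symmetry groups $\text{Aut}(G_{f^{(2)}})$ and $\text{Aut}(G_f)$ of the two summands in the resulting Lagrange-interpolation expansion. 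If you want to salvage your combinatorial route you must supply an actual splicing algorithm for (i)--(ii) and prove it terminates with a bijection and no label collisions; as written the proposal is a plausible plan, not a proof.
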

\begin{proof}
Note that for all
\[
\forall\:f\in\left\{ h\in\mathbb{Z}_{n}^{\mathbb{Z}_{n}}:\begin{array}{c}
h^{\left(n-1\right)}\left(\mathbb{Z}_{n}\right)=\left\{ 0\right\} \\
h\left(i\right)\le i,\ \forall\:i\in\mathbb{Z}_{n}
\end{array}\right\} ,
\]
the iterate $f^{\left(2^{\left\lfloor \ln_{2}\left(n\right)\right\rfloor }\right)}$
yields the identically zero function. Furthermore the existence of
\[
g\in\left\{ h\in\mathbb{Z}_{n}^{\mathbb{Z}_{n}}:\begin{array}{c}
h^{\left(n-1\right)}\left(\mathbb{Z}_{n}\right)=\left\{ 0\right\} \\
h\left(i\right)\le i,\ \forall\:i\in\mathbb{Z}_{n}
\end{array}\right\} \text{ such that }n>\max_{\sigma\in\text{S}_{n}}\left|\left\{ \left|\sigma g\sigma^{(-1)}\left(i\right)-i\right|:i\in\mathbb{Z}_{n}\right\} \right|,
\]
implies the existence of some function
\[
f\in\left\{ g^{\left(2^{\kappa}\right)}\,:\,0\le\kappa<\left\lceil \ln_{2}\left(n-1\right)\right\rceil \right\} \subset\left\{ h\in\mathbb{Z}_{n}^{\mathbb{Z}_{n}}\,:\,\begin{array}{c}
h^{\left(n-1\right)}\left(\mathbb{Z}_{n}\right)=\left\{ 0\right\} \\
h\left(i\right)\le i,\ \forall\:i\in\mathbb{Z}_{n}
\end{array}\right\} ,
\]
for which 
\[
\max_{\sigma\in\text{S}_{n}}\left|\left\{ \left|\sigma f^{\left(2\right)}\sigma^{(-1)}\left(i\right)-i\right|:i\in\mathbb{Z}_{n}\right\} \right|>\max_{\sigma\in\text{S}_{n}}\left|\left\{ \left|\sigma f\sigma^{(-1)}\left(i\right)-i\right|:i\in\mathbb{Z}_{n}\right\} \right|.
\]
Consequently if the purported claim of the composition lemma holds,
there can be no member $g$ in the semigroup for which 
\[
n>\max_{\sigma\in\text{S}_{n}}\left|\left\{ \left|\sigma g\sigma^{(-1)}\left(i\right)-i\right|:i\in\mathbb{Z}_{n}\right\} \right|.
\]
Consequently, to prove the desired claim it suffices to show that
\[
\forall\:f\in\left\{ h\in\mathbb{Z}_{n}^{\mathbb{Z}_{n}}:\begin{array}{c}
h^{\left(n-1\right)}\left(\mathbb{Z}_{n}\right)=\left\{ 0\right\} \\
h\left(i\right)\le i,\ \forall\:i\in\mathbb{Z}_{n}
\end{array}\right\} \,\text{ subject to Aut}\left(G_{f}\right)\subsetneq\text{Aut}\left(G_{f^{\left(2\right)}}\right)
\]
\[
n=\max_{\sigma\in\text{S}_{n}}\left|\left\{ \left|\sigma f^{\left(2\right)}\sigma^{(-1)}\left(i\right)-i\right|\,:\,i\in\mathbb{Z}_{n}\right\} \right|\implies n=\max_{\sigma\in\text{S}_{n}}\left|\left\{ \left|\sigma f\sigma^{(-1)}\left(i\right)-i\right|\,:\,i\in\mathbb{Z}_{n}\right\} \right|.
\]
Recall that $\mathbf{A}_{G_{f}}\in\left\{ 0,1\right\} ^{n\times n}$
denotes the adjacency matrix of $G_{f}$ whose entries are such that
\[
\mathbf{A}_{G_{f}}\left[i,j\right]=\begin{cases}
\begin{array}{cc}
1 & \text{ if }j=f\left(i\right)\\
0 & \text{otherwise}
\end{array}, & \forall\:0\le i,j<n\end{cases}.
\]
We express a variant of the determinantal certificate construction
as
\[
\prod_{\begin{array}{c}
t\in\left\{ 0,1\right\} \\
0\le i<j<n
\end{array}}\left[\left(\mathbf{I}_{n}\left[j,:\right]\left(\mathbf{A}_{G_{f}}-\mathbf{I}_{n}\right)^{t}\mathbf{x}\right)^{2}-\left(\mathbf{I}_{n}\left[i,:\right]\left(\mathbf{A}_{G_{f}}-\mathbf{I}_{n}\right)^{t}\mathbf{x}\right)^{2}\right]^{2}\text{ mod}\left\{ \begin{array}{c}
\left(x_{k}\right)^{\underline{n}}\\
k\in\mathbb{Z}_{n}
\end{array}\right\} .
\]
For notational convenience let 
\[
V\left(\mathbf{x}\right)=\prod_{0\le i<j<n}\left(x_{j}^{2}-x_{i}^{2}\right)^{2}.
\]
To an arbitrary $g\in\mathbb{Z}_{n}^{\mathbb{Z}_{n}}$, we associated
the polynomial 
\[
P_{g}:=\prod_{t\in\left\{ 0,1\right\} }V\left(\left(\mathbf{A}_{G_{g}}-\mathbf{I}_{n}\right)^{t}\mathbf{x}\right)=V\left(\mathbf{x}\right)V\left(\left(\mathbf{A}_{G_{g}}-\mathbf{I}_{n}\right)\mathbf{x}\right).
\]
The polynomial $P_{f^{\left(2\right)}}$ is thus given by 
\[
P_{f^{\left(2\right)}}=\prod_{t\in\left\{ 0,1\right\} }V\left(\left(\mathbf{A}_{G_{f^{\left(2\right)}}}-\mathbf{I}_{n}\right)^{t}\mathbf{x}\right)=V\left(\mathbf{x}\right)V\left(\left(\mathbf{A}_{G_{f^{\left(2\right)}}}-\mathbf{I}_{n}\right)\mathbf{x}\right).
\]
By the quotient remainder theorem $P_{f^{\left(2\right)}}$ admits
an expansion of the form
\[
P_{f^{\left(2\right)}}=V\left(\mathbf{x}\right)\left(\sum_{l\in\mathbb{Z}_{n}}\left(x_{l}\right)^{\underline{n}}q_{l}\left(\mathbf{x}\right)+\sum_{h\in\mathbb{Z}_{n}^{\mathbb{Z}_{n}}}V\left(\left(\mathbf{A}_{G_{f^{\left(2\right)}}}-\mathbf{I}_{n}\right)\mathbf{A}_{G_{h}}\mathbf{v}\right)\prod_{\begin{array}{c}
k\in\mathbb{Z}_{n}\\
j_{k}\in\mathbb{Z}_{n}\backslash\left\{ h\left(k\right)\right\} 
\end{array}}\left(\frac{\mathbf{I}\left[k,:\right]\mathbf{x}-j_{k}}{h\left(k\right)-j_{k}}\right)\right)
\]
where 
\[
\mathbf{v}\left[u\right]=u,\quad\forall\:u\in\mathbb{Z}_{n}.
\]
We then apply to the polynomial $P_{f^{\left(2\right)}}$ the map
prescribed by
\begin{equation}
\left(\mathbf{A}_{G_{f^{\left(2\right)}}}-\mathbf{I}_{n}\right)^{t}\mathbf{x}\mapsto\left(\mathbf{A}_{G_{f}}+\mathbf{I}_{n}\right)^{-t}\left(\mathbf{A}_{G_{f^{\left(2\right)}}}-\mathbf{I}_{n}\right)^{t}\mathbf{x},\ \forall\:t\in\left\{ 0,1\right\} .\label{Invertible linear map}
\end{equation}
More precisely the first factor $V\left(\mathbf{x}\right)$ of $P_{f^{\left(2\right)}}$
is left unchanged by the proposed map where as the map effects in
the second factor $V\left(\left(\mathbf{A}_{G_{f^{\left(2\right)}}}-\mathbf{I}_{n}\right)\mathbf{x}\right)$
of $P_{f^{\left(2\right)}}$ the linear transformation 
\[
\mathbf{x}\mapsto\left(\mathbf{A}_{G_{f}}+\mathbf{I}_{n}\right)^{-1}\mathbf{x}.
\]
The said transformation is well defined since
\[
2=\det\left(\mathbf{A}_{G_{f}}+\mathbf{I}_{n}\right),\quad\forall\:f\in\left\{ h\in\mathbb{Z}_{n}^{\mathbb{Z}_{n}}:\begin{array}{c}
h^{\left(n-1\right)}\left(\mathbb{Z}_{n}\right)=\left\{ 0\right\} \\
h\left(i\right)\le i,\ \forall\:i\in\mathbb{Z}_{n}
\end{array}\right\} .
\]
Furthermore the transformation maps $P_{f^{\left(2\right)}}$ to $P_{f}$
since
\[
\left(\mathbf{A}_{G_{f}}-\mathbf{I}_{n}\right)\left(\mathbf{A}_{G_{f}}+\mathbf{I}_{n}\right)=\left(\mathbf{A}_{G_{f^{\left(2\right)}}}-\mathbf{I}_{n}\right)=\left(\mathbf{A}_{G_{f}}+\mathbf{I}_{n}\right)\left(\mathbf{A}_{G_{f}}-\mathbf{I}_{n}\right),\quad\forall\,f\in\mathbb{Z}_{n}^{\mathbb{Z}_{n}}.
\]
\[
\implies P_{f}=V\left(\mathbf{x}\right)\sum_{l\in\mathbb{Z}_{n}}\left(\mathbf{I}\left[l,:\right]\left(\mathbf{A}_{G_{f}}+\mathbf{I}_{n}\right)^{-1}\mathbf{x}\right)^{\underline{n}}\,q_{l}\left(\left(\mathbf{A}_{G_{f}}+\mathbf{I}_{n}\right)^{-1}\mathbf{x}\right)+
\]
\[
V\left(\mathbf{x}\right)\sum_{h\in\mathbb{Z}_{n}^{\mathbb{Z}_{n}}}V\left(\left(\mathbf{A}_{G_{f^{\left(2\right)}}}-\mathbf{I}_{n}\right)\mathbf{A}_{G_{h}}\mathbf{v}\right)\prod_{\begin{array}{c}
k\in\mathbb{Z}_{n}\\
j_{k}\in\mathbb{Z}_{n}\backslash\left\{ h\left(k\right)\right\} 
\end{array}}\left(\frac{\mathbf{I}\left[k,:\right]\left(\mathbf{A}_{G_{f}}+\mathbf{I}_{n}\right)^{-1}\mathbf{x}-j_{k}}{h\left(k\right)-j_{k}}\right)
\]
The premise that $f$ lies in the semigroup described by the set 
\[
\left\{ h\in\mathbb{Z}_{n}^{\mathbb{Z}_{n}}:\begin{array}{c}
h^{\left(n-1\right)}\left(\mathbb{Z}_{n}\right)=\left\{ 0\right\} \\
h\left(i\right)\le i,\ \forall\:i\in\mathbb{Z}_{n}
\end{array}\right\} 
\]
implies that
\[
V\left(\left(\mathbf{A}_{G_{f^{\left(2\right)}}}-\mathbf{I}_{n}\right)\left(\mathbf{A}_{G_{f}}+\mathbf{I}_{n}\right)^{-1}\mathbf{x}\right)=\left(\sum_{l\in\mathbb{Z}_{n}}\left(\mathbf{I}\left[l,:\right]\left(\mathbf{A}_{G_{f}}+\mathbf{I}_{n}\right)^{-1}\mathbf{x}\right)^{\underline{n}}\,q_{l}\left(\left(\mathbf{A}_{G_{f}}+\mathbf{I}_{n}\right)^{-1}\mathbf{x}\right)\right)+
\]
 
\[
\sum_{h\in\mathbb{Z}_{n}^{\mathbb{Z}_{n}}}V\left(\left(\mathbf{A}_{G_{f^{\left(2\right)}}}-\mathbf{I}_{n}\right)\mathbf{A}_{G_{h}}\mathbf{v}\right)\prod_{j_{0}\in\mathbb{Z}_{n}\backslash\left\{ h\left(0\right)\right\} }\left(2^{-1}\frac{x_{0}-j_{0}}{h\left(0\right)-j_{0}}-\frac{2^{-1}j_{0}}{h\left(0\right)-j_{0}}\right)\times
\]
\[
\prod_{\begin{array}{c}
k\in\mathbb{Z}_{n}\backslash\left\{ 0\right\} \\
j_{k}\in\mathbb{Z}_{n}\backslash\left\{ h\left(k\right)\right\} 
\end{array}}\left(\frac{x_{k}-j_{k}}{h\left(k\right)-j_{k}}+\frac{\underset{0\le\tau<k}{\sum}\left(\mathbf{A}_{G_{f}}+\mathbf{I}_{n}\right)^{-1}\left[k,\tau\right]x_{\tau}}{h\left(k\right)-j_{k}}\right)
\]
For notational convenience let 
\[
\lambda_{k}:=\left(\mathbf{A}_{G_{f}}+\mathbf{I}_{n}\right)^{-1}\left[k,k\right]=\begin{cases}
\begin{array}{cc}
2^{-1} & \text{ if }k=0\\
1 & \text{otherwise}
\end{array} & \forall\,k\in\mathbb{Z}_{n}\end{cases}.
\]
Invoking the other binomial identity
\[
\prod_{i\in I}(a_{i}+b_{i})=\prod_{i\in I}a_{i}+\sum_{\substack{(k_{i}:i\in I)\in\{0,1\}^{n}\\
\underset{i\in I}{\prod}k_{i}=0
}
}\left(\prod_{i\in I}a_{i}^{k_{i}}b_{i}^{1-k_{i}}\right),
\]
yields
\[
V\left(\left(\mathbf{A}_{G_{f^{\left(2\right)}}}-\mathbf{I}_{n}\right)\left(\mathbf{A}_{G_{f}}+\mathbf{I}_{n}\right)^{-1}\mathbf{x}\right)=\left(\sum_{l\in\mathbb{Z}_{n}}\left(\mathbf{I}\left[l,:\right]\left(\mathbf{A}_{G_{f}}+\mathbf{I}_{n}\right)^{-1}\mathbf{x}\right)^{\underline{n}}\,q_{l}\left(\left(\mathbf{A}_{G_{f}}+\mathbf{I}_{n}\right)^{-1}\mathbf{x}\right)\right)+
\]
 
\[
\left(\sum_{h\in\mathbb{Z}_{n}^{\mathbb{Z}_{n}}}\frac{V\left(\left(\mathbf{A}_{G_{f^{\left(2\right)}}}-\mathbf{I}_{n}\right)\mathbf{A}_{G_{h}}\mathbf{v}\right)}{2^{n-1}}\prod_{\begin{array}{c}
k\in\mathbb{Z}_{n}\\
j_{k}\in\mathbb{Z}_{n}\backslash\left\{ h\left(k\right)\right\} 
\end{array}}\left(\frac{x_{k}-j_{k}}{h\left(k\right)-j_{k}}\right)\right)+
\]
\[
\sum_{h\in\mathbb{Z}_{n}^{\mathbb{Z}_{n}}}V\left(\left(\mathbf{A}_{G_{f^{\left(2\right)}}}-\mathbf{I}_{n}\right)\mathbf{A}_{G_{h}}\mathbf{v}\right)\sum_{\begin{array}{c}
a_{k,j_{k}}\in\left\{ 0,1\right\} \\
0=\prod_{k,j_{k}}a_{k,j_{k}}
\end{array}}\prod_{\begin{array}{c}
k\in\mathbb{Z}_{n}\\
j_{k}\in\mathbb{Z}_{n}\backslash\left\{ h\left(k\right)\right\} 
\end{array}}\left(\lambda_{k}\frac{x_{k}-j_{k}}{h\left(k\right)-j_{k}}\right)^{a_{k,j_{k}}}\times
\]
\begin{equation}
\left(\frac{\underset{0\le\tau<k}{\sum}\left(\mathbf{A}_{G_{f}}+\mathbf{I}_{n}\right)^{-1}\left[k,\tau\right]x_{\tau}+\left(\lambda_{k}-1\right)j_{k}}{h\left(k\right)-j_{k}}\right)^{1-a_{k,j_{k}}}.\label{Semi_expansion}
\end{equation}
The product of $V\left(\mathbf{x}\right)$ with the polynomial summand
\[
\sum_{h\in\mathbb{Z}_{n}^{\mathbb{Z}_{n}}}\frac{V\left(\left(\mathbf{A}_{G_{f^{\left(2\right)}}}-\mathbf{I}_{n}\right)\mathbf{A}_{G_{h}}\mathbf{v}\right)}{2^{n-1}}\prod_{\begin{array}{c}
k\in\mathbb{Z}_{n}\\
j_{k}\in\mathbb{Z}_{n}\backslash\left\{ h\left(k\right)\right\} 
\end{array}}\left(\frac{x_{k}-j_{k}}{h\left(k\right)-j_{k}}\right)
\]
taken from the semi expanded expression of $V\left(\left(\mathbf{A}_{G_{f^{\left(2\right)}}}-\mathbf{I}_{n}\right)\left(\mathbf{A}_{G_{f}}+\mathbf{I}_{n}\right)^{-1}\mathbf{x}\right)$
described in Eq (\ref{Semi_expansion}) is congruent to 
\[
\frac{P_{f^{\left(2\right)}}\left(\mathbf{x}\right)}{2^{n-1}}\text{ mod}\left\{ \left(x_{k}\right)^{\underline{n}}\,:\,k\in\mathbb{Z}_{n}\right\} .
\]
Furthermore
\[
n=\max_{\sigma\in\text{S}_{n}}\left|\left\{ \left|\sigma f^{\left(2\right)}\sigma^{(-1)}\left(i\right)-i\right|\,:\,i\in\mathbb{Z}_{n}\right\} \right|\implies P_{f^{\left(2\right)}}\left(\mathbf{x}\right)\not\equiv0\mod\left\{ \begin{array}{c}
\left(x_{k}\right)^{\underline{n}}\\
k\in\mathbb{Z}_{n}
\end{array}\right\} .
\]
 Recall that the automorphism group of $P_{f^{\left(2\right)}}\left(\mathbf{x}\right)$
is Aut$\left(G_{f^{\left(2\right)}}\right)$. Whereas the remaining
summand in the semi-expanded expression of $V\left(\left(\mathbf{A}_{G_{f^{\left(2\right)}}}-\mathbf{I}_{n}\right)\left(\mathbf{A}_{G_{f}}+\mathbf{I}_{n}\right)^{-1}\mathbf{x}\right)$
described in Eq (\ref{Semi_expansion}) has symmetry group Aut$\left(G_{f}\right)$.
\[
\implies\prod_{t\in\left\{ 0,1\right\} }V\left(\left(\mathbf{A}_{G_{f}}+\mathbf{I}_{n}\right)^{-t}\left(\mathbf{A}_{G_{f^{\left(2\right)}}}-\mathbf{I}_{n}\right)^{t}\mathbf{x}\right)\not\equiv0\mod\left\{ \begin{array}{c}
\left(x_{k}\right)^{\underline{n}}\\
k\in\mathbb{Z}_{n}
\end{array}\right\} 
\]
\[
\implies P_{f}\left(\mathbf{x}\right)\not\equiv0\mod\left\{ \begin{array}{c}
\left(x_{k}\right)^{\underline{n}}\\
k\in\mathbb{Z}_{n}
\end{array}\right\} .
\]
Hence 
\[
n=\max_{\sigma\in\text{S}_{n}}\left|\left\{ \left|\sigma f^{\left(2\right)}\sigma^{(-1)}\left(i\right)-i\right|\,:\,i\in\mathbb{Z}_{n}\right\} \right|\implies n=\max_{\sigma\in\text{S}_{n}}\left|\left\{ \left|\sigma f\sigma^{(-1)}\left(i\right)-i\right|\,:\,i\in\mathbb{Z}_{n}\right\} \right|.
\]
From which we conclude in turn that
\[
\max_{\sigma\in\text{S}_{n}}\left|\left\{ \left|\sigma f^{\left(2\right)}\sigma^{(-1)}\left(i\right)-i\right|:i\in\mathbb{Z}_{n}\right\} \right|\le\max_{\sigma\in\text{S}_{n}}\left|\left\{ \left|\sigma f\sigma^{(-1)}\left(i\right)-i\right|:i\in\mathbb{Z}_{n}\right\} \right|.
\]
as claimed.
\end{proof}
Note that the premise Aut$\left(G_{f}\right)\subsetneq$ Aut$\left(G_{f^{\left(2\right)}}\right)$
incurs no loss of generality when $n>2$. For we see that
\[
\forall\:f\in\left\{ h\in\mathbb{Z}_{n}^{\mathbb{Z}_{n}}:\begin{array}{c}
h^{\left(n-1\right)}\left(\mathbb{Z}_{n}\right)=\left\{ 0\right\} \\
h\left(i\right)\le i,\ \forall\:i\in\mathbb{Z}_{n}
\end{array}\right\} ,
\]
subject to Aut$\left(G_{f}\right)=$ Aut$\left(G_{f^{\left(2\right)}}\right)$
there exist 
\[
g\in\left\{ h\in\mathbb{Z}_{n}^{\mathbb{Z}_{n}}:\begin{array}{c}
h^{\left(n-1\right)}\left(\mathbb{Z}_{n}\right)=\left\{ 0\right\} \\
h\left(i\right)\le i,\ \forall\:i\in\mathbb{Z}_{n}
\end{array}\right\} ,
\]
whose graph $G_{g}$ differs from $G_{f}$ by a fixed point swap and
for which Aut$\left(G_{g}\right)\subsetneq$ Aut$\left(G_{g^{\left(2\right)}}\right)$.
For instance we devise such a graph $G_{g}$ systematically from $G_{f}$
by relocating the fixed point to a vertex at edge distance exactly
$2$ from a leaf node. Crucially, the resulting graphs $G_{g}$ differs
from $G_{f}$ by a fixed point swap and
\[
\left|\text{GrL}\left(G_{f}\right)\right|=\left|\text{GrL}\left(G_{g}\right)\right|.
\]

In contrast to the original statement of the composition lemma we
propose a variant applicable to all members the monoid $\mathbb{Z}_{n}^{\mathbb{Z}_{n}}$.
We now state and prove a completely different proof our stronger variant
of the composition lemma
\begin{lem*}[Composition Lemma b]
 Let $f\in\mathbb{Z}_{n}^{\mathbb{Z}_{n}}$ be such that Aut$\left(G_{f}\right)\subsetneq$
Aut$\left(G_{f^{\left(2\right)}}\right)$, then
\[
\max_{\sigma\in\text{S}_{n}}\left|\left\{ \left|\sigma f^{\left(2\right)}\sigma^{(-1)}\left(i\right)-i\right|:i\in\mathbb{Z}_{n}\right\} \right|\le\max_{\sigma\in\text{S}_{n}}\left|\left\{ \left|\sigma f\sigma^{(-1)}\left(i\right)-i\right|:i\in\mathbb{Z}_{n}\right\} \right|.
\]
\end{lem*}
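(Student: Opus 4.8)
The plan is to restate the inequality, for
\[
d\;:=\;\max_{\sigma\in\text{S}_{n}}\left|\left\{\left|\sigma f^{\left(2\right)}\sigma^{(-1)}\left(i\right)-i\right|:i\in\mathbb{Z}_{n}\right\}\right|,
\]
as an implication between two polynomial non-vanishing certificates modulo $\left\{\left(x_{k}\right)^{\underline{n}}\equiv0:k\in\mathbb{Z}_{n}\right\}$, of the kind used in the Determinantal Certificate proposition but now detecting whether there are at least $d$ distinct induced edge labels rather than exactly $n$. For $g\in\mathbb{Z}_{n}^{\mathbb{Z}_{n}}$ and $1\le d\le n$ set
\[
E_{g}^{\left(d\right)}\left(\mathbf{x}\right)\;:=\;\sum_{S\subseteq\mathbb{Z}_{n},\ \left|S\right|=d}\left(\prod_{i<j,\ i,j\in S}\left(\left(x_{g\left(j\right)}-x_{j}\right)^{2}-\left(x_{g\left(i\right)}-x_{i}\right)^{2}\right)\right)^{2},
\]
a sum of squares; note that for $d=n$ one has $\left(\prod_{0\le i<j<n}\left(x_{j}-x_{i}\right)\right)^{2}E_{g}^{\left(n\right)}=P_{g}$, the polynomial of the Stabilizer subgroup proposition. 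The product $\left(\prod_{0\le i<j<n}\left(x_{j}-x_{i}\right)\right)^{2}E_{g}^{\left(d\right)}$ is non-negative on $\left(\mathbb{Z}_{n}\right)^{n}$ and vanishes at a lattice point unless that point is a permutation of $\mathbb{Z}_{n}$ at which the induced edge labels $\left|x_{g\left(i\right)}-x_{i}\right|$ realise at least $d$ distinct values; since its canonical representative is the Lagrange interpolant of the lattice evaluations, it is $\equiv0$ modulo the relations precisely when $\max_{\sigma}\left|\left\{\left|\sigma g\sigma^{(-1)}\left(i\right)-i\right|:i\in\mathbb{Z}_{n}\right\}\right|<d$. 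Passing to the $\text{S}_{n}$-orbit sum $\sum_{\sigma\in\text{S}_{n}}E_{\sigma g\sigma^{(-1)}}^{\left(d\right)}$ turns this into a genuine symmetric polynomial, at which point only the elementary fact that a symmetric polynomial is constant on $\text{S}_{n}$-orbits of lattice points is needed. With $g\in\left\{f,f^{\left(2\right)}\right\}$ and $d$ as above, the lemma becomes: non-vanishing of the certificate for $f^{\left(2\right)}$ modulo the relations forces non-vanishing of the certificate for $f$.

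The bridge between the two certificates is the identity $\mathbf{A}_{G_{f^{\left(2\right)}}}-\mathbf{I}_{n}=\left(\mathbf{A}_{G_{f}}-\mathbf{I}_{n}\right)\left(\mathbf{A}_{G_{f}}+\mathbf{I}_{n}\right)$ recorded in the proof of Composition Lemma a and valid for every $f$. It gives $x_{f^{\left(2\right)}\left(i\right)}-x_{i}=\left(\left(\mathbf{A}_{G_{f}}-\mathbf{I}_{n}\right)\left(\mathbf{A}_{G_{f}}+\mathbf{I}_{n}\right)\mathbf{x}\right)_{i}$ for each $i$, hence factor by factor
\[
E_{f^{\left(2\right)}}^{\left(d\right)}\left(\mathbf{x}\right)=E_{f}^{\left(d\right)}\left(\left(\mathbf{A}_{G_{f}}+\mathbf{I}_{n}\right)\mathbf{x}\right).
\]
Consequently a permutation $\mathbf{x}_{0}$ of $\mathbb{Z}_{n}$ with $E_{f^{\left(2\right)}}^{\left(d\right)}\left(\mathbf{x}_{0}\right)>0$ produces the integer vector $\mathbf{z}_{0}=\left(\mathbf{A}_{G_{f}}+\mathbf{I}_{n}\right)\mathbf{x}_{0}$, whose entries lie in $\left\{0,1,\dots,2n-2\right\}$ and whose induced $f$-edge differences $z_{0,f\left(i\right)}-z_{0,i}=x_{0,f^{\left(2\right)}\left(i\right)}-x_{0,i}$ already lie in $\left\{-\left(n-1\right),\dots,n-1\right\}$, and at which $E_{f}^{\left(d\right)}$ is positive. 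Thus, with $d$ as defined, the whole content of the lemma is that such a near-graceful labeling of $G_{f}$ valued in $\mathbb{Z}_{2n-1}$ forces a genuine labeling of $G_{f}$ valued in $\mathbb{Z}_{n}$ realising at least $d$ distinct induced edge labels.

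To carry out this transfer I would expand $E_{f}^{\left(d\right)}\circ\left(\mathbf{A}_{G_{f}}+\mathbf{I}_{n}\right)$ through the quotient--remainder theorem and split its canonical representative modulo the relations into the summand that, up to a non-zero rational scalar, equals the canonical representative of $\left(\prod\left(x_{j}-x_{i}\right)\right)^{2}E_{f^{\left(2\right)}}^{\left(d\right)}$ --- this summand is invariant under $\text{Aut}\left(G_{f^{\left(2\right)}}\right)$ and is non-zero modulo the relations by the hypothesis that $d$ is attained for $f^{\left(2\right)}$ --- plus a remaining summand tied to $f$. One then shows, by a symmetry computation of the kind carried out in the Stabilizer subgroup proposition, that this remaining summand is invariant only under $\text{Aut}\left(G_{f}\right)$ and in particular not under all of $\text{Aut}\left(G_{f^{\left(2\right)}}\right)$; since $\text{Aut}\left(G_{f}\right)\subsetneq\text{Aut}\left(G_{f^{\left(2\right)}}\right)$, the two summands transform differently and cannot cancel, so $\left(\prod\left(x_{j}-x_{i}\right)\right)^{2}E_{f}^{\left(d\right)}\not\equiv0$ modulo the relations, i.e. $d\le\max_{\sigma}\left|\left\{\left|\sigma f\sigma^{(-1)}\left(i\right)-i\right|:i\in\mathbb{Z}_{n}\right\}\right|$, which is the assertion of the lemma. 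When $\mathbf{A}_{G_{f}}+\mathbf{I}_{n}$ is invertible --- i.e. when $f$ has no cycle of even length --- this runs parallel to the proof of Composition Lemma a, now applied to the certificates $E^{\left(d\right)}$ in place of $P$.

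The step I expect to be the real obstacle is exactly the loss of invertibility. One computes $\det\left(\mathbf{A}_{G_{f}}+\mathbf{I}_{n}\right)=\prod_{c}\left(1-\left(-1\right)^{c}\right)$, the product over the cycle lengths $c$ of $f$ (in agreement with $\det\left(\mathbf{A}_{G_{f}}+\mathbf{I}_{n}\right)=2$ in the setting of Composition Lemma a, where the only cycle is the fixed point), so this determinant vanishes as soon as $f$ has a cycle of even length and the clean invertible change of variables is then unavailable. Handling that case --- equivalently, proving that splitting an even cycle of $f$ into the two cycles of half its length occurring in $f^{\left(2\right)}$ cannot increase the maximal number of distinct induced edge labels --- is precisely where the hypothesis $\text{Aut}\left(G_{f}\right)\subsetneq\text{Aut}\left(G_{f^{\left(2\right)}}\right)$ is indispensable; note that a single $2$-cycle, for which $\text{Aut}\left(G_{f}\right)=\text{Aut}\left(G_{f^{\left(2\right)}}\right)$, is the degenerate case in which the conclusion is vacuous. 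I would treat it by combining the symmetric-polynomial certificate above with a direct analysis of the action of $\mathbf{A}_{G_{f}}+\mathbf{I}_{n}$ on the cyclic part of $G_{f}$, using only that the ring of $\text{S}_{n}$-invariant polynomials is spanned by orbit sums, so that non-negativity of $E_{f}^{\left(d\right)}$ on $\left(\mathbb{Z}_{n}\right)^{n}$ can be read off from a single representative lattice point.
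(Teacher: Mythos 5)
Your certificate set-up (the subset sums $E_{g}^{(d)}$, the reduction to non-vanishing modulo $\{(x_{k})^{\underline{n}}\}$, and the identity $E_{f^{(2)}}^{(d)}(\mathbf{x})=E_{f}^{(d)}\left((\mathbf{A}_{G_{f}}+\mathbf{I}_{n})\mathbf{x}\right)$) is sound and matches the paper's framing, but the transfer mechanism you then reach for is the one from the paper's proof of Composition Lemma a, namely the change of variables $\mathbf{x}\mapsto(\mathbf{A}_{G_{f}}+\mathbf{I}_{n})^{-1}\mathbf{x}$ followed by a quotient--remainder expansion and a comparison of the symmetry groups of the two resulting summands. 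That mechanism is only available on the restricted semigroup $\{h:h^{(n-1)}(\mathbb{Z}_{n})=\{0\},\ h(i)\le i\}$, where $\det(\mathbf{A}_{G_{f}}+\mathbf{I}_{n})=2$; as you correctly compute, for general $f\in\mathbb{Z}_{n}^{\mathbb{Z}_{n}}$ the determinant is $\prod_{c}(1-(-1)^{c})$ and vanishes as soon as $f$ has a cycle of even length. You flag this as ``the real obstacle'' and then offer only an intention (``I would treat it by combining \dots a direct analysis of the action of $\mathbf{A}_{G_{f}}+\mathbf{I}_{n}$ on the cyclic part''); no argument is actually given for the singular case. Since the entire point of Lemma b over Lemma a is to cover all of $\mathbb{Z}_{n}^{\mathbb{Z}_{n}}$, this is not a loose end but the substance of the theorem: what you have written re-derives (a $d$-graded version of) Lemma a and leaves Lemma b open. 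A secondary issue appears even in the invertible case: the point $\mathbf{z}_{0}=(\mathbf{A}_{G_{f}}+\mathbf{I}_{n})\mathbf{x}_{0}$ has entries in $\{0,\dots,2n-2\}$, so positivity of $E_{f}^{(d)}$ at $\mathbf{z}_{0}$ says nothing directly about the canonical representative modulo $\{(x_{k})^{\underline{n}}\}$, which is determined only by evaluations on $(\mathbb{Z}_{n})^{n}$; your own last sentence of the second paragraph concedes that closing this gap is ``the whole content of the lemma,'' which is an accurate self-assessment.

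For comparison, the paper's proof of Lemma b avoids matrix inversion altogether. It telescopes each edge difference as $x_{f(j)}-x_{j}=(x_{f(j)}-x_{f^{(2)}(j)})+(x_{f^{(2)}(j)}-x_{j})$ inside $P_{f}$, expands to obtain $P_{f}=P_{f^{(2)}}+(\text{cross terms})$, and then sums $P_{\sigma f\sigma^{(-1)}}$ over one representative $\sigma$ per left coset of $\text{Aut}(G_{f^{(2)}})$, working modulo the power-sum relations $\sum_{i}\left((x_{i})^{k}-i^{k}\right)\equiv0$. The hypothesis $\text{Aut}(G_{f})\subsetneq\text{Aut}(G_{f^{(2)}})$ enters by forcing the cross-term orbit sum to be non-symmetric, hence, by the fundamental theorem of symmetric polynomials, dependent on the choice of coset representatives, which is what blocks cancellation against the term $\left|\text{GrL}(G_{f^{(2)}})\right|\prod_{0\le i\ne j<n}(j-i)(j^{2}-i^{2})$. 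To salvage your approach you would need either to redo your symmetry comparison on the telescoped decomposition (thereby rejoining the paper's argument) or to supply an actual proof for functions with even cycles; at present that case is missing.
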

\begin{proof}
Let 
\[
m=\max_{\sigma\in\text{S}_{n}}\left|\left\{ \left|\sigma f^{\left(2\right)}\sigma^{(-1)}\left(i\right)-i\right|:i\in\mathbb{Z}_{n}\right\} \right|,
\]
where $1<m\le n$. By Prop. (2), the desired claim is equivalent to
the assertion that
\[
0\not\equiv\sum_{\begin{array}{c}
S\subset\mathbb{Z}_{n}\\
\left|S\right|=m
\end{array}}\text{LCM}\left\{ \prod_{0\le i<j<n}\left(x_{j}-x_{i}\right)^{2},\prod_{\begin{array}{c}
i<j\\
\left(i,j\right)\in S\times S
\end{array}}\left(\left(x_{f^{\left(2\right)}\left(j\right)}-x_{j}\right)^{2}-\left(x_{f^{\left(2\right)}\left(i\right)}-x_{i}\right)^{2}\right)^{2}\right\} \text{mod}\left\{ \begin{array}{c}
\left(x_{k}\right)^{\underline{n}}\\
k\in\mathbb{Z}_{n}
\end{array}\right\} ,
\]
implies that 
\[
0\not\equiv\sum_{\begin{array}{c}
S\subset\mathbb{Z}_{n}\\
\left|S\right|=m
\end{array}}\text{LCM}\left\{ \prod_{0\le i<j<n}\left(x_{j}-x_{i}\right)^{2},\prod_{\begin{array}{c}
i<j\\
\left(i,j\right)\in S\times S
\end{array}}\left(\left(x_{f\left(j\right)}-x_{j}\right)^{2}-\left(x_{f\left(i\right)}-x_{i}\right)^{2}\right)^{2}\right\} \text{mod}\left\{ \begin{array}{c}
\left(x_{k}\right)^{\underline{n}}\\
k\in\mathbb{Z}_{n}
\end{array}\right\} .
\]
For simplicity and clarity we prove the claim when $n=m$. The argument
is the same in the setting where $1<m<n$ but considerably more burdensome
in notation. In other words we restrict the proof to the setting where
$m=n$ to avoid oppressive details. In particular we show that 
\[
0\not\equiv\text{LCM}\left\{ \prod_{0\le i<j<n}\left(x_{j}-x_{i}\right),\prod_{0\le i<j<n}\left(\left(x_{f^{\left(2\right)}\left(j\right)}-x_{j}\right)^{2}-\left(x_{f^{\left(2\right)}\left(i\right)}-x_{i}\right)^{2}\right)\right\} \text{mod}\left\{ \begin{array}{c}
\left(x_{k}\right)^{\underline{n}}\\
k\in\mathbb{Z}_{n}
\end{array}\right\} ,
\]
implies that 
\[
0\not\equiv\text{LCM}\left\{ \prod_{0\le i<j<n}\left(x_{j}-x_{i}\right),\prod_{0\le i<j<n}\left(\left(x_{f\left(j\right)}-x_{j}\right)^{2}-\left(x_{f\left(i\right)}-x_{i}\right)^{2}\right)\right\} \text{mod}\left\{ \begin{array}{c}
\left(x_{k}\right)^{\underline{n}}\\
k\in\mathbb{Z}_{n}
\end{array}\right\} .
\]
For this purpose, we associate with an arbitrary $g\in\mathbb{Z}_{n}^{\mathbb{Z}_{n}}$,
a multivariate polynomial $P_{g}\left(\mathbf{x}\right)\in\mathbb{Q}\left[x_{0},\cdots,x_{n-1}\right]$
given by 
\[
P_{g}\left(\mathbf{x}\right)=\prod_{0\le i\ne j<n}\left(x_{j}-x_{i}\right)\left(\left(x_{g\left(j\right)}-x_{j}\right)^{2}-\left(x_{g\left(i\right)}-x_{i}\right)^{2}\right).
\]
We prove the desired claim by contradiction. Assume that
\[
n=\max_{\sigma\in\text{S}_{n}}\left|\left\{ \left|\sigma f^{\left(2\right)}\sigma^{(-1)}\left(i\right)-i\right|:i\in\mathbb{Z}_{n}\right\} \right|\;\text{ and }\;\text{Aut}\left(G_{f}\right)\subsetneq\text{Aut}\left(G_{f^{\left(2\right)}}\right).
\]
Let 
\[
P_{f}\left(\mathbf{x}\right)=\prod_{0\le i\ne j<n}\left(x_{j}-x_{i}\right)\left(\left(x_{f\left(j\right)}-x_{j}\right)^{2}-\left(x_{f\left(i\right)}-x_{i}\right)^{2}\right).
\]
By telescoping and rearranging terms in the each factor we write
\[
\begin{array}{ccc}
P_{f}\left(\mathbf{x}\right) & = & \underset{0\le i\ne j<n}{\prod}\left(x_{j}-x_{i}\right)\left(\left(x_{f\left(j\right)}-x_{f^{\left(2\right)}\left(j\right)}+x_{f^{\left(2\right)}\left(j\right)}-x_{j}\right)^{2}-\left(x_{f\left(i\right)}-x_{f^{\left(2\right)}\left(i\right)}+x_{f^{\left(2\right)}\left(i\right)}-x_{i}\right)^{2}\right),\\
\\
\\
 & = & \underset{0\le i\ne j<n}{\prod}\left(x_{j}-x_{i}\right)\left(\left(x_{f\left(j\right)}-x_{f^{\left(2\right)}\left(j\right)}\right)^{2}+\left(x_{f^{\left(2\right)}\left(j\right)}-x_{j}\right)^{2}+2\left(x_{f\left(j\right)}-x_{f^{\left(2\right)}\left(j\right)}\right)\left(x_{f^{\left(2\right)}\left(j\right)}-x_{j}\right)+\right.\\
 &  & \left.\left(-1\right)\left(x_{f\left(i\right)}-x_{f^{\left(2\right)}\left(i\right)}\right)^{2}-\left(x_{f^{\left(2\right)}\left(i\right)}-x_{i}\right)^{2}-2\left(x_{f\left(i\right)}-x_{f^{\left(2\right)}\left(i\right)}\right)\left(x_{f^{\left(2\right)}\left(i\right)}-x_{i}\right)\right),\\
\\
\\
P_{f}\left(\mathbf{x}\right) & = & P_{f^{\left(2\right)}}\left(\mathbf{x}\right)+\underset{\begin{array}{c}
k_{ij}\in\left\{ 0,1\right\} \\
0=\underset{0\le i\ne j<n}{\prod}k_{ij}
\end{array}}{\sum}\underset{0\le i\ne j<n}{\prod}\left(x_{j}-x_{i}\right)\left(\left(x_{f^{\left(2\right)}\left(j\right)}-x_{j}\right)^{2}-\left(x_{f^{\left(2\right)}\left(i\right)}-x_{i}\right)^{2}\right)^{k_{ij}}\left(\left(x_{f\left(j\right)}-x_{f^{\left(2\right)}\left(j\right)}\right)^{2}+\right.\\
 &  & \left.\left(-1\right)\left(x_{f\left(i\right)}-x_{f^{\left(2\right)}\left(i\right)}\right)^{2}+2\left(x_{f\left(j\right)}-x_{f^{\left(2\right)}\left(j\right)}\right)\left(x_{f^{\left(2\right)}\left(j\right)}-x_{j}\right)-2\left(x_{f\left(i\right)}-x_{f^{\left(2\right)}\left(i\right)}\right)\left(x_{f^{\left(2\right)}\left(i\right)}-x_{i}\right)\right)^{1-k_{ij}}.
\end{array}
\]
Let 
\[
\text{GrL}\left(G_{f}\right)\,:=\left\{ G_{\sigma f\sigma^{\left(-1\right)}}:\sigma\in\nicefrac{\text{S}_{n}}{\text{Aut}\left(G_{f}\right)}\;\text{ and }\;n=\left|\left\{ \left|\sigma f\sigma^{\left(-1\right)}\left(j\right)-j\right|\,:\,j\in\mathbb{Z}_{n}\right\} \right|\right\} ,
\]
then summing over a conjugation orbit of arbitrarily chosen coset
representatives, (where we are careful to select only one coset representative
per left coset of Aut$\left(G_{f^{\left(2\right)}}\right)$), yields
the congruence identity
\[
\left(\sum_{\sigma\in\nicefrac{\text{S}_{n}}{\text{Aut}G_{f^{\left(2\right)}}}}P_{\sigma f\sigma^{\left(-1\right)}}\left(\mathbf{x}\right)\right)\equiv\left|\text{GrL}\left(G_{f^{\left(2\right)}}\right)\right|\prod_{0\le i\ne j<n}\left(j-i\right)\left(j^{2}-i^{2}\right)+
\]
\[
\left(-1\right)^{{n \choose 2}}\prod_{0\le i\ne j<n}\left(j-i\right)\sum_{\sigma\in\nicefrac{\text{S}_{n}}{\text{Aut}G_{f^{\left(2\right)}}}}\sum_{\begin{array}{c}
k_{ij}\in\left\{ 0,1\right\} \\
0=\underset{i\ne j}{\prod}k_{ij}
\end{array}}\prod_{0\le i\ne j<n}\left(\left(x_{\sigma f^{\left(2\right)}\sigma^{\left(-1\right)}\left(j\right)}-x_{j}\right)^{2}-\left(x_{\sigma f^{\left(2\right)}\sigma^{\left(-1\right)}\left(i\right)}-x_{i}\right)^{2}\right)^{k_{ij}}\times
\]
\[
\left(2\left(x_{\sigma f\sigma^{\left(-1\right)}\left(j\right)}-x_{\sigma f^{\left(2\right)}\sigma^{\left(-1\right)}\left(j\right)}\right)\left(x_{\sigma f^{\left(2\right)}\sigma^{\left(-1\right)}\left(j\right)}-x_{j}\right)-2\left(x_{\sigma f\sigma^{\left(-1\right)}\left(i\right)}-x_{\sigma f^{\left(2\right)}\sigma^{\left(-1\right)}\left(i\right)}\right)\left(x_{\sigma f^{\left(2\right)}\sigma^{\left(-1\right)}\left(i\right)}-x_{i}\right)+\right.
\]
\[
\left.\left(x_{\sigma f\sigma^{\left(-1\right)}\left(j\right)}-x_{\sigma f^{\left(2\right)}\sigma^{\left(-1\right)}\left(j\right)}\right)^{2}-\left(x_{\sigma f\sigma^{\left(-1\right)}\left(i\right)}-x_{\sigma f^{\left(2\right)}\sigma^{\left(-1\right)}\left(i\right)}\right)^{2}\right)^{1-k_{ij}}\text{mod}\left\{ \begin{array}{c}
\underset{i\in\mathbb{Z}_{n}}{\sum}\left(\left(x_{i}\right)^{k}-i^{k}\right)\\
0<k\le n
\end{array}\right\} .
\]
 Since $\text{Aut}\left(G_{f}\right)\subsetneq\text{Aut}\left(G_{f^{\left(2\right)}}\right)$,
it follows that the polynomial
\[
\sum_{\sigma\in\nicefrac{\text{S}_{n}}{\text{Aut}G_{f^{\left(2\right)}}}}\sum_{\begin{array}{c}
k_{ij}\in\left\{ 0,1\right\} \\
0=\underset{i\ne j}{\prod}k_{ij}
\end{array}}\prod_{0\le i\ne j<n}\left(\left(x_{\sigma f^{\left(2\right)}\sigma^{\left(-1\right)}\left(j\right)}-x_{j}\right)^{2}-\left(x_{\sigma f^{\left(2\right)}\sigma^{\left(-1\right)}\left(i\right)}-x_{i}\right)^{2}\right)^{k_{ij}}\times
\]
\[
\left(2\left(x_{\sigma f\sigma^{\left(-1\right)}\left(j\right)}-x_{\sigma f^{\left(2\right)}\sigma^{\left(-1\right)}\left(j\right)}\right)\left(x_{\sigma f^{\left(2\right)}\sigma^{\left(-1\right)}\left(j\right)}-x_{j}\right)-2\left(x_{\sigma f\sigma^{\left(-1\right)}\left(i\right)}-x_{\sigma f^{\left(2\right)}\sigma^{\left(-1\right)}\left(i\right)}\right)\left(x_{\sigma f^{\left(2\right)}\sigma^{\left(-1\right)}\left(i\right)}-x_{i}\right)+\right.
\]
\[
\left.\left(x_{\sigma f\sigma^{\left(-1\right)}\left(j\right)}-x_{\sigma f^{\left(2\right)}\sigma^{\left(-1\right)}\left(j\right)}\right)^{2}-\left(x_{\sigma f\sigma^{\left(-1\right)}\left(i\right)}-x_{\sigma f^{\left(2\right)}\sigma^{\left(-1\right)}\left(i\right)}\right)^{2}\right)^{1-k_{ij}}.
\]
does not lie in the ring of symmetric polynomial in the entries of
$\mathbf{x}$. Consequently there must be at least one evaluation
point $\mathbf{x}$ subject to the constraints
\[
\left\{ 0=\sum_{i\in\mathbb{Z}_{n}}\left(\left(x_{i}\right)^{k}-i^{k}\right),\quad\forall\:0<k\le n\right\} ,
\]
for which the evaluation of the said polynomial depend on the choice
of coset representatives. For otherwise, evaluations of the said polynomial
would be independent of choices of coset representatives. In which
case the polynomial obtained by summing over the conjugation orbit
of coset representatives would be symmetric and thereby contradict
the premise $\text{Aut}\left(G_{f}\right)\subsetneq\text{Aut}\left(G_{f^{\left(2\right)}}\right)$.
In fact, by the fundamental theorem of symmetric polynomials, we know
that
\[
\left(\sum_{\sigma\in\nicefrac{\text{S}_{n}}{\text{Aut}G_{f^{\left(2\right)}}}}P_{\sigma f\sigma^{\left(-1\right)}}\left(\mathbf{x}\right)\right)\equiv\left|\text{GrL}\left(G_{f}\right)\right|\prod_{0\le i\ne j<n}\left(j-i\right)\left(j^{2}-i^{2}\right)+
\]
\[
\sum_{\sigma\in\nicefrac{\text{S}_{n}}{\text{Aut}G_{f^{\left(2\right)}}}}\left(\sum_{\gamma\in\left(\nicefrac{\text{Aut}G_{f^{\left(2\right)}}}{\text{Aut}G_{f}}\right)\backslash\left\{ \text{id}\right\} }P_{\sigma\gamma f\left(\sigma\gamma\right)^{\left(-1\right)}}\left(\mathbf{x}\right)\right)\mod\left\{ \begin{array}{c}
\underset{i\in\mathbb{Z}_{n}}{\sum}\left(\left(x_{i}\right)^{k}-i^{k}\right)\\
0<k\le n
\end{array}\right\} .
\]
We thus conclude that
\[
P_{f}\left(\mathbf{x}\right)\not\equiv0\text{ mod}\left\{ \begin{array}{c}
\left(x_{k}\right)^{\underline{n}}\\
k\in\mathbb{Z}_{n}
\end{array}\right\} .
\]
\end{proof}
As pointed out by in \cite{gnang2020graceful}, Lem. (5) yields as
an immediate corollary that all trees are graceful. When our proposed
variant is taken in conjunction with the result of Abrham and Kotzig
\cite{ABRHAM19941}, Lem. (5) also establishes that the graphs $C_{1}\cup(2^{s}\,$C$_{2^{t}})$
(consisting of $2^{s}$ copies of a $2^{t}$-cycle and a trivial component
made up of a vertex with an attached loop edge) where $s>0$ and $t>1$
are graceful.

\section*{Acknowledgement}

This paper is based upon work supported by the Defense Technical Information
Center (DTIC) under award number FA8075-18-D-0001/0015. The views
and conclusions contained in this work are those of the authors and
should not be interpreted as necessarily representing the official
policies, either expressed or implied, of the DTIC.

\bibliographystyle{amsalpha}
\bibliography{A_new_proof_of_the_composition_Lemma}

\providecommand{\bysame}{\leavevmode\hbox to3em{\hrulefill}\thinspace}
\providecommand{\MR}{\relax\ifhmode\unskip\space\fi MR }
\providecommand{\MRhref}[2]{%
  \href{http://www.ams.org/mathscinet-getitem?mr=#1}{#2}
}
\providecommand{\href}[2]{#2}
\begin{thebibliography}{Gna20}

\bibitem[AK94]{ABRHAM19941}
Jaromir Abrham and Anton Kotzig, \emph{All 2-regular graphs consisting of
  4-cycles are graceful}, Discrete Mathematics \textbf{135} (1994), no.~1,
  1--14.

\bibitem[Gna20]{gnang2020graceful}
Edinah~K. Gnang, \emph{On graceful labelings of trees. arxiv preprint
  \emph{arXiv:1811.07614}}, 2020.

\end{thebibliography}

\end{document}